\newcommand\figseven{.}
\def\frk{\mathfrak}               
\def\Phi{{\frk N}}
\def\tb{{\bf t}}
\def\xb{{\bf x}}
\newtheorem{Theorem}{Theorem}[section]
\newtheorem{Lemma}[Theorem]{Lemma}
\newtheorem{Proposition}[Theorem]{Proposition}
\theoremstyle{definition}
\newtheorem{Remark}[Theorem]{Remark}
\newtheorem{Example}[Theorem]{Example}
\begin{document}

\title{Many toric ideals generated by quadratic binomials possess no
 quadratic Gr\"obner bases}
\author{Takayuki Hibi, Kenta Nishiyama, Hidefumi Ohsugi and Akihiro Shikama}

\thanks{
{\bf 2010 Mathematics Subject Classification:}
Primary 13F20.
\\
\, \, \, {\bf Keywords:}
toric ideal, finite graph, Gr\"obner basis
}
\address{Takayuki Hibi,
Department of Pure and Applied Mathematics,
Graduate School of Information Science and Technology,
Osaka University,
Toyonaka, Osaka 560-0043, Japan}
\email{hibi@math.sci.osaka-u.ac.jp}

\address{Kenta Nishiyama,
School of Management and Information,
University of Shizuoka,
Suruga-ku, Shizuoka 422-8526, Japan}
\email{k-nishiyama@u-shizuoka-ken.ac.jp}

\address{Hidefumi Ohsugi,
Department of Mathematics,
College of Science,
Rikkyo University,
Toshima-ku, Tokyo 171-8501, Japan} 
\email{ohsugi@rikkyo.ac.jp}

\address{Akihiro Shikama,
Department of Pure and Applied Mathematics,
Graduate School of Information Science and Technology,
Osaka University,
Toyonaka, Osaka 560-0043, Japan}
\email{a-shikama@cr.math.sci.osaka-u.ac.jp}

\begin{abstract} 
Let $G$ be a finite connected simple graph and $I_{G}$ the toric ideal
of the edge ring $K[G]$ of $G$.
In the present paper, we study finite graphs $G$ 
with the property that 
$I_{G}$ is generated by quadratic binomials and $I_{G}$ 
possesses no quadratic Gr\"obner basis.
First, we give a nontrivial infinite series of finite graphs
with the above property.
Second, we implement a combinatorial characterization for
$I_{G}$ to be generated by quadratic binomials and,
by means of the computer search, we
classify the finite graphs $G$ with the above property,
up to $8$ vertices.
\end{abstract}

\maketitle

\section*{Introduction}

Let $G$ be a finite connected simple graph on the vertex set
$[n] = \{ 1, 2, \ldots, n \}$ with $E(G) = \{ e_{1}, \ldots, e_{d} \}$ its edge set.  
(Recall that a finite graph is {\em simple} if it possesses no loop and no multiple edge.)
Let $K$ be a field and $K[\tb] = K[t_{1}, \ldots, t_{n}]$ the polynomial ring in $n$ variables
over $K$.  If $e = \{i, j\} \in E(G)$, then $\tb^{e}$ stands for the quadratic monomial
$t_{i}t_{j} \in K[\tb]$.  The {\em edge ring} (\cite{quadratic}) of $G$ is the subring 
$K[G] = K[\tb^{e_{1}}, \ldots, \tb^{e_{d}}]$ of $K[\tb]$.
Let $K[\xb] = K[x_{1}, \ldots, x_{d}]$ denote the polynomial ring in $d$ variables over $K$
with each $\deg x_{i} = 1$
and define the surjective ring homomorphism $\pi : K[\xb] \to K[G]$ by setting
$\pi(x_{i}) = \tb^{e_{i}}$ for each $1 \leq i \leq d$.
The {\em toric ideal} $I_{G}$ of $G$ is the kernel $\pi$.
It is known \cite[Corollary 4.3]{Stu} that $I_{G}$ is generated by those binomials
$u - v$, where $u$ and $v$ are monomials of $K[\xb]$ with
$\deg u = \deg v$, such that $\pi(u) = \pi(v)$.

The distinguished properties on $K[G]$ and $I_{G}$ 
in which commutative algebraists are especially interested are as follows: 
\begin{enumerate}
\item[(i)]
$I_{G}$ is generated by quadratic binomials;
\item[(ii)]
$K[G]$ is Koszul;
\item[(iii)]
$I_{G}$ possesses a quadratic Gr\"obner basis, i.e., a Gr\"obner basis 
consisting of quadratic binomials.
\end{enumerate}
The hierarchy
(iii) $\Rightarrow$ (ii) $\Rightarrow$ (i)
is true.  However, 
(i) $\Rightarrow$ (ii)  is false.
We refer the reader to \cite{quadratic} for the quick information 
together with basic literature on these properties.
A Koszul toric ring whose toric ideal possesses no quadratic Gr\"obner basis
is given in \cite[Example 2.2]{quadratic}.
Moreover, consult, e.g., to \cite[Chapter 2]{JuTa} for fundamental materials on Gr\"obner bases.

We study finite connected simple graphs $G$ 
satisfying the following condition:
\begin{enumerate}
\item[$(*)$]
$I_{G}$ is generated by quadratic binomials and $I_{G}$ 
possesses no quadratic Gr\"obner basis.
\end{enumerate}
We say that a finite connected simple graphs $G$ is {\em $(*)$-minimal}
if $G$ satisfies the condition $(*)$ and if no induced subgraph
$H$ $(\neq G)$ satisfies the condition $(*)$.
A $(*)$-minimal graph is given in \cite[Example 2.1]{quadratic}. 

In the present paper, after summarizing known results on $I_{G}$ in Section $1$,  
a nontrivial infinite series of $(*)$-minimal finite graphs is given
in Section $2$.
In Section $3$, 
we implement a combinatorial characterization for
$I_{G}$ to be generated by quadratic binomials (\cite[Theorem 1.2]{quadratic}) and,
by means of the computer search, we
classify the finite graphs $G$ satisfying the condition $(*)$,
up to $8$ vertices. 

Finally, an outstanding problem is to find a finite graph $G$
for which $K[G]$ is Koszul, but $I_{G}$ possesses no quadratic Gr\"obner basis.
We do {\em not} know that 
there exists such an example in our infinite series of $(*)$-minimal finite graphs. 

\section{Known results on toric ideals of graphs}

In this section, we introduce graph theoretical terminology and
known results. 
Let $G$ be a connected graph with the vertex set $V(G) = [n] = \{1,2,\ldots,n\}$
and the edge set $E(G)$.
We assume that $G$ has no loops and no multiple edges.
A walk of length $q$ of $G$ connecting $v_1\in V(G)$ and $v_{q+1} \in V(G)$ is 
a finite sequence of the form
\begin{equation}
\label{walk}
\Gamma =
 (
\{v_1,v_2\},
\{v_2,v_3\},
\ldots,
\{v_{q},v_{q+1}\}
)
\end{equation}
with each $\{v_k,v_{k+1}\} \in E(G)$.
An {\em even} (resp.~{\em odd}) {\em walk} is a walk of even (resp.~odd) length.
A walk $\Gamma$ of the form (\ref{walk}) is called {\em closed} if $v_{q+1} =v_1$. 
A {\em cycle} is a closed walk
\begin{equation}
\label{defofcycle}
C = 
 (
\{v_1,v_2\},
\{v_2,v_3\},
\ldots,
\{v_{q},v_{1}\}
)
\end{equation}
with $q \geq 3$ and $v_i \neq v_j$ for all $1 \leq i < j \leq q$.
A {\em chord} of a cycle (\ref{defofcycle}) is an edge $e \in E(G)$ of the form
$e = \{v_i,v_j\}$ for some $1 \leq i < j \leq q$ with $e \notin E(C)$.
If a cycle (\ref{defofcycle}) is even, 
an {\em even-chord} (resp.~{\em odd-chord}) of (\ref{defofcycle})
is a chord $e = \{v_i,v_j\}$ with $1 \leq i < j \leq q$ 
such that $j-i$ is odd (resp.~even).
If $e = \{v_i,v_j\}$ and $e' = \{v_{i'},v_{j'}\}$ are chords of a cycle  (\ref{defofcycle})
with $1 \leq i < j \leq q$ and $1 \leq i' < j' \leq q$,
then we say that $e$ and $e'$ {\em cross} in $C$
if the following conditions are satisfied:
\begin{itemize}
\item[(i)]
Either $i < i' < j < j'$ or $i' < i < j' < j$; 
\item[(ii)]
Either $\{  \{v_i,v_{i'}\}, \{v_j, v_{j'}\}  \} \subset E(C)$ or 
$\{  \{v_i,v_{j'}\}, \{v_j, v_{i'}\}  \} \subset E(C)$.
\end{itemize}
A {\em minimal} cycle of $G$ is a cycle having no chords.
If $C_1$ and $C_2$ are cycles of $G$ having no common vertices,
then a {\em bridge} between $C_1$ and $C_2$ is an edge $\{i,j\}$ of $G$
with $i \in V(C_1)$ and $j \in V(C_2)$.

The toric ideal $I_G$ is generated by the binomials associated with
even closed walks.
Given an even closed walk
$
\Gamma = (e_{i_1},e_{i_2},\ldots,e_{i_{2q}})
$
of $G$, we write $f_\Gamma$ for the binomial
$$
f_\Gamma = 
\prod_{k=1}^q x_{i_{2k-1}} -
\prod_{k=1}^q x_{i_{2k}}
\in I_G.
$$
It is known (\cite[Proposition 3.1]{Vil}, \cite[Chapter 9]{Stu}
and \cite[Lemma 1.1]{quadratic})
that 

\begin{Proposition}
Let $G$ be a connected graph.
Then, $I_G$ is generated by all the binomials
$
f_\Gamma
$,
where 
$\Gamma$ is an even closed walk of $G$.
In particular, $I_G=(0)$ if and only if 
$G$ has at most one cycle and the cycle is odd.
\end{Proposition}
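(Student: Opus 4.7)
The proof splits along the two assertions. For the first, I would invoke Sturmfels' general characterization \cite[Corollary 4.3]{Stu}, which says that $I_G$ is generated by the binomials $x^u - x^v$ with $\pi(x^u) = \pi(x^v)$. Writing $u = x_{i_1}\cdots x_{i_q}$ and $v = x_{j_1}\cdots x_{j_q}$, the condition $\pi(x^u) = \pi(x^v)$ translates into the statement that every vertex of $G$ lies in the same number of edges (counted with multiplicity) in the multiset $U = \{e_{i_1},\ldots,e_{i_q}\}$ as in $V = \{e_{j_1},\ldots,e_{j_q}\}$. The plan is to extract from these two multisets an even closed walk $\Gamma$ alternating between edges of $U$ and edges of $V$, and then to induct on $q$ after removing the edges of $\Gamma$ from $U$ and $V$. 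To produce $\Gamma$, pick any $e_{i_1} \in U$ with $e_{i_1} = \{v_1,v_2\}$; because $v_2$ occurs in some edge of $V$, we can choose $e_{j_1} \in V$ incident to $v_2$; continue alternating, each time finding an unused edge in the appropriate multiset by pigeonhole on vertex occurrences. Since $G$ is finite, this process must revisit some vertex, producing an even closed walk. Removing the edges used in $\Gamma$ from $U$ and $V$ yields strictly smaller multisets still satisfying the balance condition, so the induction completes the step.

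For the second assertion, I would first observe that if $G$ is a tree or if $G$ is unicyclic with an odd cycle, then for every even closed walk $\Gamma$ the multiset of odd-indexed edges and the multiset of even-indexed edges of $\Gamma$ must coincide, forcing $f_\Gamma = 0$. In the tree case, any closed walk traverses each edge an even number of times, and the bipartition of any tree matches odd and even positions of the walk to a fixed side, so the two products agree. In the unicyclic case with odd cycle $C$, the walk must traverse $C$ an even number of times, and the parity shift forced by each traversal of the odd cycle flips which edges are in odd versus even positions; combining an even number of traversals, the contributions from $C$ still match between the two sides, and the remaining tree-edge contributions match as before. Conversely, if $G$ contains an even cycle $C$, then $\Gamma = C$ yields $f_\Gamma \ne 0$. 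If $G$ has no even cycle but contains two distinct (odd) cycles $C_1, C_2$, I would connect them by a walk $P$ in $G$ and form the closed walk that traverses $C_1$, then $P$, then $C_2$, then $P$ in reverse; its length is even, and the resulting binomial is nonzero because the edges of $C_1$ and $C_2$ each appear on only one side.

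The main obstacle will be the forward direction of the second assertion: showing rigorously that every even closed walk $\Gamma$ in a tree or in a unicyclic graph with odd cycle yields $f_\Gamma = 0$. One has to argue that in such a walk the edges are traversed in a way that pairs them equally between the two alternating sides. The cleanest route is probably to decompose $\Gamma$ into elementary pieces, namely tree backtracks of the form $(e,e)$ and full traversals around the unique odd cycle $C$, track the parity contribution of each piece, and use the oddness of $|C|$ together with the evenness of the total length of $\Gamma$ to conclude that $C$ is traversed an even number of times. Once this combinatorial bookkeeping is in place, both implications follow cleanly from Part 1.
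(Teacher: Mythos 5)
The paper itself offers no proof of this proposition; it is quoted as known from Villarreal, Sturmfels and Ohsugi--Hibi, so your argument has to stand on its own. Your strategy for the first assertion is the standard one, but the step ``since $G$ is finite, this process must revisit some vertex, producing an even closed walk'' has a parity gap: if $v_k=v_l$ is the first repetition, the closed subwalk between the two occurrences has length $l-k$, which may be odd, and in that case the edge leaving $v_k$ and the edge entering $v_l$ are of the same type ($U$ or $V$), so the subwalk is not an alternating even closed walk and yields no binomial $f_\Gamma$. The correct termination argument is that the balance of $U$-degrees and $V$-degrees at every vertex lets the alternating walk be continued at every vertex except the starting vertex $v_1$ after arriving along a $V$-edge; hence it can only halt by closing up at $v_1$ into an alternating, and therefore even, closed walk. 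You should also make the induction step explicit: writing $u=m_1w_1$ and $v=m_2w_2$ with $f_\Gamma=m_1-m_2$, one has $u-v=w_1f_\Gamma+m_2(w_1-w_2)$ with $w_1-w_2\in I_G$ of smaller degree; merely ``removing the edges of $\Gamma$ from $U$ and $V$'' does not by itself prove membership in the ideal generated by the $f_\Gamma$.

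In the second assertion your converse direction has a genuine gap: two distinct odd cycles of $G$ need not be edge-disjoint, and if $C_1$ and $C_2$ share edges the walk $C_1PC_2P^{-1}$ repeats edges, so the claim that the edges of $C_1$ and $C_2$ ``each appear on only one side'' can fail and $f_\Gamma$ could vanish. One must first reduce to edge-disjoint odd cycles (for instance via the cycle space: if $G$ has no even cycle, the symmetric difference of two distinct odd cycles is a nonempty edge-disjoint union of odd cycles of even total cardinality, hence contains two edge-disjoint odd cycles), or, much more cleanly, abandon the walk combinatorics altogether: $I_G=(0)$ if and only if the exponent vectors of the monomials $\tb^{e_i}$ are linearly independent, i.e.\ $d=\dim K[G]$, and for a connected graph $\dim K[G]$ equals $n$ or $n-1$ according as $G$ is non-bipartite or bipartite; since $d\geq n-1$ with equality exactly for trees and $d=n$ exactly for unicyclic graphs, this gives both implications of the ``in particular'' statement at once. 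That argument also disposes of the forward direction, which you correctly identify as the delicate point of your walk-based bookkeeping in trees and odd-unicyclic graphs.
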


Note that, for a binomial $f \in I_G$,
$\deg (f) =2$ 
if and only if there exists an even cycle $C$ of $G$ of length 4
such that $f = f_C$.
On the other hand, a criterion for the existence of a quadratic binomial generators of  $I_G$
is given in \cite[Theorem 1.2]{quadratic}.

\begin{Proposition}
\label{quadcriterion}
Let $G$ be a finite connected graph.
Then, $I_G$ is generated by quadratic binomials\footnote{
Even if $I_G  =(0)$, we say that ``$I_G$ is generated by quadratic binomials"
and ``$I_G$ possesses a quadratic Gr\"obner basis."
} if and only if the following conditions are
satisfied:
\begin{itemize}
\item[(i)]
If $C$ is an even cycle of $G$ of length $\geq 6$, then either $C$ has an even-chord
or $C$ has three odd-chords $e$, $e'$ and $e''$ such that $e$ and $e'$ cross in $C$;
\item[(ii)]
If $C_1$ and $C_2$ are minimal odd cycles having exactly one common vertex,
then there exists an edge $\{i,j\} \notin E(C_1) \cup E(C_2)$ with $i \in V(C_1)$
and $j \in V(C_2)$;
\item[(iii)]
If $C_1$ and $C_2$ are minimal odd cycles having no common vertex,
then there exist at least two bridges between $C_1$ and $C_2$.
\end{itemize}
\end{Proposition}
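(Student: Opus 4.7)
My plan is to establish both directions of the equivalence by analyzing reductions of the binomials $f_\Gamma$ associated to even closed walks of $G$. Since $I_G$ is generated by the set $\{f_\Gamma : \Gamma \text{ an even closed walk}\}$ (Proposition~1.1) and since a quadratic binomial in $I_G$ is necessarily of the form $f_C$ for a $4$-cycle $C$, the proposition is equivalent to the assertion that every $f_\Gamma$ lies in the ideal $J := (f_C : C \text{ is a 4-cycle of } G)$.

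For the ``only if'' direction I would argue contrapositively, exhibiting for each failing condition an even closed walk whose binomial is not in $J$. If (i) fails, there is an even cycle $C$ of length $\geq 6$ with no even-chord and no triple of odd-chords satisfying the crossing property; one checks that $f_C$ is a primitive binomial of $I_G$ that cannot be reduced modulo $J$, because any step using a $4$-cycle binomial would require a chord configuration on $C$ which by hypothesis is absent. If (ii) fails, the closed walk obtained by concatenating two minimal odd cycles $C_1, C_2$ at their common vertex has even length $\geq 6$, and its binomial resists reduction because no $4$-cycle is available at the shared vertex. If (iii) fails, the closed walk $(C_1, e, C_2, e)$ built from two vertex-disjoint odd cycles and a unique bridge $e$ is even, and the same kind of local argument shows its binomial is not in $J$.

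For the ``if'' direction I would induct on the length of an even closed walk $\Gamma$. The base case, where $\Gamma$ has length $4$, is immediate. For the inductive step I would locate a sub-cycle $C$ inside $\Gamma$ and use conditions (i)--(iii) to produce a $4$-cycle $C'$ such that subtracting a suitable monomial multiple $h\cdot f_{C'}$ from $f_\Gamma$ yields the binomial of a strictly shorter even closed walk, to which the induction hypothesis applies. Condition (i) handles the case where $\Gamma$ collapses to a single even cycle of length $\geq 6$: an even-chord cleaves such a cycle into two shorter even closed walks, while three odd-chords with two crossing can be assembled into a $4$-cycle whose removal shortens the walk. Conditions (ii) and (iii) handle walks that wind through two minimal odd cycles joined as prescribed, where the guaranteed extra edge or bridges again supply a $4$-cycle with which to shorten $\Gamma$.

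The hard part will be the three-odd-chord case of (i). Verifying that the crossing pair of odd-chords, together with the third odd-chord, can always be organized so that the associated $4$-cycle reduction leaves a genuinely shorter even closed walk requires a careful case analysis of the relative positions of the three chords along $C$, including the two possibilities for condition (ii) on the crossing pair. A related delicate point is the case of even closed walks that repeat edges and are therefore not themselves cycles; the bookkeeping needed to ensure that the inductive reduction stays within the class of even closed walks, rather than breaking into pieces that are not closed or not even, is where the combinatorial heart of the proof lies.
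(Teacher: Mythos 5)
The paper does not prove this proposition at all: it is quoted verbatim from Ohsugi--Hibi \cite[Theorem 1.2]{quadratic}, so the comparison must be with that original argument, whose overall architecture (reduce to membership of each $f_\Gamma$ in the ideal generated by the $4$-cycle binomials, then argue both directions combinatorially) your outline does share. The problem is that what you have written is a plan rather than a proof, and the places you defer are exactly where the theorem lives. In the ``only if'' direction, the assertion that $f_C$ ``cannot be reduced modulo $J$'' is the whole point and is not justified. Membership of $f_C=u-v$ in an ideal generated by binomials is not refuted by checking that no single reduction step applies to $u$ or $v$; you must work with the $\ZZ^n$-multigrading and show that $u$ and $v$ lie in different connected components of the graph on the fiber $\pi^{-1}(\pi(u))$ whose edges are the quadratic moves. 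That fiber is supported on the induced subgraph on $V(C)$, which may contain many odd-chords (only even-chords and crossing triples are excluded when (i) fails), and odd-chords do create other monomials in the fiber and genuine $4$-cycles among chords; ruling out a connecting chain of quadratic moves through these is a real argument, not an observation. Similarly, your witness for a failure of (iii) only treats the one-bridge case: if there are \emph{zero} bridges you must take a connecting path traversed twice, and the resulting $f_\Gamma$ is a monomial times a smaller binomial, which changes the non-membership analysis.

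In the ``if'' direction the same issue recurs: you explicitly flag the three-odd-chord case and the bookkeeping for non-cycle walks as ``the combinatorial heart'' and then do not carry them out. A cleaner route (and closer to what \cite{quadratic} actually does) is to first invoke the classification of primitive even closed walks --- each is an even cycle, or two odd cycles meeting in exactly one vertex, or two vertex-disjoint odd cycles joined by walks --- which reduces the induction to three structured cases instead of arbitrary even closed walks; without either that classification or the deferred case analysis, the inductive step is not established. As it stands the proposal identifies the correct strategy but proves neither implication.
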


If $G$ is bipartite, then the following is shown in
\cite{Koszul}:

\begin{Proposition}
\label{Koszulbipartite}
Let $G$ be a bipartite graph.
Then the following conditions are equivalent:
\begin{itemize}
\item[(i)]
Every cycle of $G$ of length $\geq 6$ has a chord;
\item[(ii)]
$I_G$ possesses a quadratic Gr\"obner basis;
\item[(iii)]
$K[G]$ is Koszul;
\item[(iv)]
$I_G$ is generated by quadratic binomials.
\end{itemize}
\end{Proposition}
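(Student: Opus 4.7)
The plan is to close the cycle of implications $(ii) \Rightarrow (iii) \Rightarrow (iv) \Rightarrow (i) \Rightarrow (ii)$. The first two implications are general facts about standard graded $K$-algebras and require nothing about $G$: having a quadratic Gr\"obner basis implies the Koszul property (via the flat deformation to the initial ideal), and being Koszul implies a quadratic presentation.

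For $(iv) \Rightarrow (i)$, I would appeal directly to Proposition \ref{quadcriterion}. Fix a bipartition $V(G) = A \sqcup B$. Every cycle in $G$ alternates between $A$ and $B$ and is therefore even, so conditions (ii) and (iii) of Proposition \ref{quadcriterion} are vacuous. Moreover, if $C = (v_1, \ldots, v_{2m}, v_1)$ is a cycle and $\{v_i, v_j\}$ is a chord, then $v_i$ and $v_j$ lie in opposite parts of the bipartition, forcing $j - i$ to be odd; in the terminology of the excerpt, every chord of an even cycle is automatically an even-chord. Hence Proposition \ref{quadcriterion}(i) asserts precisely that every cycle of length $\geq 6$ has a chord, which is condition (i).

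The substantive direction is $(i) \Rightarrow (ii)$. Here I would argue by induction on $|E(G)|$, aiming to construct an explicit quadratic Gr\"obner basis $\mathcal{G} = \{f_C : C \text{ a 4-cycle of } G\}$ with respect to a carefully chosen monomial order. The chord hypothesis lets one select a vertex $v$ of $G$ that can be peeled off so that $G \setminus v$ still satisfies (i), and then use a lexicographic edge order in which variables incident to $v$ are the largest. The inductive Gr\"obner basis for $G \setminus v$ extends by the new 4-cycle binomials through $v$. To finish, Buchberger's criterion must be verified: when two 4-cycles $C,C'$ share an edge, the symmetric difference of their edge sets supports an even closed walk whose underlying reduced cycle has length $4$ or $6$. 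In the length-$6$ case, hypothesis (i) supplies a chord, which splits the $6$-cycle into two $4$-cycles and hence yields the quadratic binomial needed to reduce the relevant S-polynomial.

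The main obstacle is this inductive step: identifying the correct vertex to peel off and verifying exhaustively that every S-polynomial reduces to zero through a chain of rewrites using only the $4$-cycle binomials. Different configurations must be addressed depending on whether two $4$-cycles share one edge, a path of two edges, or three edges; in each situation the crucial input is that (i) produces the chord needed to supply the next reducing binomial. Packaging this verification into a clean combinatorial lemma is the heart of the argument.
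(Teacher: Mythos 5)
The paper does not prove this proposition at all: it is quoted as a known result from the reference \cite{Koszul} (Ohsugi--Hibi, \emph{Koszul bipartite graphs}), so there is no in-paper argument to match yours against. Judged on its own terms, your plan is sound in outline, and the easy parts are correct: (ii) $\Rightarrow$ (iii) $\Rightarrow$ (iv) are the standard general facts, and your derivation of (iv) $\Leftrightarrow$ (i) from Proposition \ref{quadcriterion} is right --- in a bipartite graph every cycle is even, conditions (ii) and (iii) of that proposition are vacuous, and every chord of an even cycle is an even-chord because the endpoints of a chord lie in opposite parts of the bipartition, so the ``three odd-chords'' alternative never arises.

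The genuine gap is in (i) $\Rightarrow$ (ii), which is the only implication carrying real content, and your writeup concedes as much. You do not specify which vertex can be ``peeled off,'' nor prove that a suitable one exists under hypothesis (i); you do not exhibit the monomial order beyond ``lexicographic with edges at $v$ largest''; and you do not carry out the Buchberger verification, which is delicate precisely because whether an S-polynomial of two $4$-cycle binomials reduces to zero depends on which of the two monomials of each intermediate binomial is the leading one under the chosen order --- the chord guaranteed by (i) gives you a $4$-cycle splitting a $6$-cycle, but you still must check that the resulting quadratic binomial has the \emph{right} leading term to perform the reduction, and this is where the choice of order (and the induction) actually earns its keep. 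As it stands the argument is a plausible strategy with the central combinatorial lemma left unstated and unproved, so it does not constitute a proof; to complete it you would either need to work out that lemma in full or simply cite \cite{Koszul} as the paper does.
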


If $G$ is not bipartite, then the conditions (iii) and (iv) are not equivalent.

\begin{Example} (\cite[Example 2.1]{quadratic})
\label{syarin}
Let $G$ be the graph in Figure \ref{syarin5}.
Then, $I_G$ is generated by quadratic binomials.
On the other hand, $K[G]$ is not Koszul and hence
$I_G$ has no quadratic Gr\"obner bases.
\begin{figure}
  \begin{center}
    \scalebox{0.35}{\includegraphics{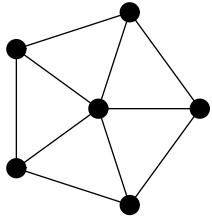}}
  \end{center}
  \caption{Wheel with 6 vertices.}
  \label{syarin5}
\end{figure} 

\end{Example}

If a graph $G'$ on the vertex set $V(G') \subset V(G)$ satisfies 
$E(G') = \{ \{i,j\} \in E(G) \ | \ i,j \in V(G') \}$, then 
$G'$ is called an {\em induced subgraph} of $G$.
The following proposition is a fundamental and important fact on
the toric ideals of graphs.

\begin{Proposition}[\cite{cpure}]
\label{inducedsubgraph}
Let $G'$ be an induced subgraph of a graph $G$.
Then, $K[G']$ is a combinatorial pure subring of $K[G]$.
In particular, 
\begin{itemize}
\item[(i)]
If $I_G$ possesses a quadratic Gr\"obner basis, then so does $I_{G'}$.
\item[(ii)]
If $K[G]$ is Koszul, then so is $K[G']$;
\item[(iii)]
If $I_G$ is generated by quadratic binomials, then so is $I_{G'}$.
\end{itemize}
\end{Proposition}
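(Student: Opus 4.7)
The plan is to first establish that $K[G']$ is a combinatorial pure subring of $K[G]$ by exhibiting the natural specialization retraction, and then deduce (i)--(iii) from the general preservation theorems for combinatorial pure subrings proved in \cite{cpure}. The only content that is genuinely about graphs (as opposed to semigroup algebras) is the purity itself, and this uses nothing more than the defining property of an induced subgraph.

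For the purity, set $S = V(G') \subseteq V(G)$. Because $G'$ is induced, $E(G') = \{ e \in E(G) : e \subseteq S\}$, so the semigroup generators $\tb^{e}$ of $K[G']$ are precisely those generators of $K[G]$ whose monomial support lies in $\{t_i : i \in S\}$. Let $K[\xb'] = K[x_e : e \in E(G')] \subseteq K[\xb]$ and define $\psi : K[\xb] \to K[\xb']$ by $\psi(x_e) = x_e$ if $e \in E(G')$ and $\psi(x_e) = 0$ otherwise. I claim $\psi(I_G) = I_{G'}$ and $I_G \cap K[\xb'] = I_{G'}$. The key point is that for any binomial $u - v \in I_G$ with $\pi(u) = \pi(v)$: if $u$ involves some $x_e$ with $e \not\subseteq S$, then $\pi(u)$ is divisible by a variable $t_i$ with $i \notin S$; hence so is $\pi(v)$, forcing $v$ to involve some $x_{e'}$ with $e' \not\subseteq S$, and $\psi(u) = \psi(v) = 0$. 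Otherwise both $u$ and $v$ lie in $K[\xb']$ and $\psi(u - v) = u - v$ belongs to $I_{G'}$. This yields both identities, which is the combinatorial purity condition.

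With purity in hand, each of (i), (ii), (iii) is an instance of a general preservation theorem from \cite{cpure}, but they can also be argued directly. For (iii), any quadratic generating set $\{g_1, \dots, g_m\}$ of $I_G$ has image $\{\psi(g_1), \dots, \psi(g_m)\} \subseteq I_{G'}$ consisting of quadratic binomials and $0$'s, and the nonzero ones generate $I_{G'}$ since $\psi(I_G) = I_{G'}$. For (i), pick a monomial order on $K[\xb]$ refining the partition in which every variable $x_e$ with $e \notin E(G')$ dominates every $x_e$ with $e \in E(G')$ (an elimination order); then the elements of a quadratic Gr\"obner basis $\mathcal{G}$ of $I_G$ lying in $K[\xb']$ form a quadratic Gr\"obner basis of $I_G \cap K[\xb'] = I_{G'}$ with respect to the restricted order. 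Statement (ii) follows from the standard fact, also contained in \cite{cpure}, that combinatorial pure subrings of Koszul toric rings are Koszul.

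The main obstacle is the Gr\"obner case (i): one has to verify that the restriction of a Gr\"obner basis to $K[\xb']$ really is a Gr\"obner basis, not merely a generating set. This is where the elimination order is essential, together with the fact that $\psi$ is a $K$-algebra retraction compatible with that order; once these are in place, the standard argument that an $S$-pair from the restricted basis which failed to reduce to zero in $K[\xb']$ would also fail in $K[\xb]$ closes the gap. Everything else is formal from the purity established in the second paragraph.
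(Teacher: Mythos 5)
The paper itself gives no argument for this Proposition: it is quoted from \cite{cpure} as a known fact, so there is no in-paper proof to compare against. Your overall plan --- establish that $K[G']$ is a combinatorial pure subring and then invoke or reprove the preservation statements --- is exactly what the citation encapsulates. Your purity argument is correct and is the standard one: the dichotomy that a binomial $u-v\in I_G$ has either both monomials in $K[\xb']$ or neither (because $\pi(u)=\pi(v)$, and inducedness of $G'$ identifies ``both endpoints in $V(G')$'' with ``edge of $G'$'') yields $I_G\cap K[\xb']=I_{G'}$ and $\psi(I_G)=I_{G'}$, and part (iii) follows as you say. Part (ii), delegated to the retract/purity results of \cite{cpure}, is also fine.

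Your treatment of (i), however, has a genuine gap. The hypothesis supplies a quadratic Gr\"obner basis of $I_G$ with respect to \emph{some} monomial order $<$ that you do not get to choose; you cannot pass to an elimination order for the variables $x_e$ with $e\notin E(G')$, because $I_G$ need not possess a quadratic Gr\"obner basis with respect to that new order. As written, your argument either applies the elimination theorem to an order that is not an elimination order, or presumes a quadratic Gr\"obner basis for an order for which none is guaranteed. The repair is short and uses only the dichotomy you already established, with no elimination order at all: let $\mathcal{G}$ be the reduced Gr\"obner basis of $I_G$ with respect to the given $<$ (still quadratic, since $\mathrm{in}_<(I_G)$ is generated in degree $2$ and the reduced Gr\"obner basis of a homogeneous toric ideal consists of binomials $u-v$ with $\deg u=\deg v$), and set $\mathcal{G}'=\mathcal{G}\cap K[\xb']$. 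For $0\neq f\in I_{G'}\subseteq I_G$, the monomial $\mathrm{in}_<(f)$ lies in $K[\xb']$ and is divisible by $\mathrm{in}_<(g)$ for some $g=u-v\in\mathcal{G}$ with $u=\mathrm{in}_<(g)$; then $u\in K[\xb']$, the dichotomy forces $v\in K[\xb']$, hence $g\in\mathcal{G}'$. Thus $\mathcal{G}'$ is a quadratic Gr\"obner basis of $I_{G'}$ with respect to the restriction of $<$, which closes the gap.
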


\section{Toric ideals of the suspension of graphs}

In this section, we study 
the existence of quadratic Gr\"obner bases of 
toric ideals of the suspension of graphs.

Let $G$ be a graph with the vertex set $V(G) = [n] = \{1,2,\ldots,n\}$
and the edge set $E(G)$.
The suspension of the graph $G$ is the new graph $\widehat{G}$
whose vertex set is $[n+1] = V(G) \cup \{n+1\}$
and whose edge set is $E(G) \cup \{ \{i,n+1\}  \ | \ i \in V(G)\}$.
Note that, any graph $G$ is an induced subgraph of its suspension $\widehat{G}$.
The {\em edge ideal} of $G$ is the monomial ideal $I(G)$ of 
$K[\tb]$ which is generated by $\{t_i t_j \ | \ \{i,j\} \in E(G)\}$.
See, e.g., \cite[Chapter 9]{JuTa}.
It is easy to see that the edge ring $K[\widehat{G}] \simeq K[{\bf x}]/ I_{\widehat{G}}$
of the suspension $\widehat{G}$ of $G$
is isomorphic to
the {\em Rees algebra}
$$
{\mathcal R}(I(G)) =
\bigoplus_{j=0}^\infty
I(G)^j s^j
=
K[t_1,\ldots,t_n, \{ t_i t_j s\}_{\{i,j\}\in E(G)}]
$$
of the edge ideal
$
I(G)
$
of $G$.

We now characterize graphs $G$ such that
$I_{\widehat{G}}$ is generated by quadratic binomials.
The {\em complementary} graph $\overline{G}$ of $G$
is the graph whose vertex set is $[n]$
and whose edges are the non-edges of $G$.
A graph $G$ is said to be {\em chordal} if any cycle of length $>3$
has a chord.
Moreover, a graph $G$ is said to be {\em co-chordal} if $\overline{G}$ is chordal.
A graph $G$ is called a $2K_2$-{\em free graph} if
it is connected and does not contain two
independent edges as an induced subgraph.
For a connected graph $G$,
\begin{itemize}
\item
$G$ is $2K_2$-free $\Leftrightarrow $ 
any cycle of $\overline{G}$ of length $4$ has a chord in $\overline{G}$;
\item
$G$ is co-chordal $\Rightarrow $ $G$ is $2K_2$-free,
\end{itemize}
hold in general.
Moreover, it is known (e.g., \cite{graphpaper}) that

\begin{Lemma}
\label{basic}
Let $G$ be a connected graph.
Then, 
\begin{itemize}
\item[(i)]
If $G$ is co-chordal, 
then any cycle of $G$ of length $\geq 5$ has a chord;
\item[(ii)]
If $G$ is $2K_2$-free, 
then any cycle of $G$ of length $\geq 6$ has a chord.
\end{itemize}
\end{Lemma}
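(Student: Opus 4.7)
The plan is to prove both parts by contrapositive: in each case, assume $G$ has a chordless cycle $C = (\{v_1,v_2\}, \{v_2,v_3\}, \ldots, \{v_k,v_1\})$ whose length exceeds the stated threshold, and derive a violation of the relevant hypothesis. The key observation in both arguments is that, since $C$ is chordless, the only edges of $G$ among the vertices of $C$ are the edges of $C$ itself; hence the edges of $\overline{G}$ among the vertices of $C$ are exactly the pairs $\{v_i, v_j\}$ that are non-consecutive on $C$. Once this is in hand, both parts reduce to choosing a small witness set of vertices on $C$ and counting which of their pairs are consecutive.

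For part~(ii), assume $k \geq 6$. Then the four vertices $v_1, v_2, v_4, v_5$ are pairwise distinct, and among them the pairs $\{v_1,v_4\}, \{v_1,v_5\}, \{v_2,v_4\}, \{v_2,v_5\}$ are all non-consecutive on a cycle of length at least $6$, so they are not edges of $C$, and by chordlessness they are not edges of $G$ either. The only edges of $G$ on $\{v_1, v_2, v_4, v_5\}$ are therefore $\{v_1, v_2\}$ and $\{v_4, v_5\}$, which induce a $2K_2$ in $G$, contradicting $2K_2$-freeness.

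For part~(i), assume $k \geq 5$; the goal is to exhibit an induced cycle of length $\geq 4$ in $\overline{G}$, contradicting chordality of $\overline{G}$. If $k \geq 6$, the same quadruple $v_1, v_2, v_4, v_5$ works: the four non-consecutive pairs above become edges of $\overline{G}$, forming a $4$-cycle $v_1, v_4, v_2, v_5, v_1$, whose only possible ``chords'' $\{v_1,v_2\}$ and $\{v_4,v_5\}$ are edges of $C$ and hence not edges of $\overline{G}$. The delicate case is $k = 5$, where this four-vertex gadget collapses because $\{v_5, v_1\}$ is itself an edge of $C$; here one instead takes all five vertices of $C$ and uses the self-complementarity of the $5$-cycle: the non-edges of $C$ on $\{v_1,\ldots,v_5\}$ assemble into the induced $5$-cycle $v_1, v_3, v_5, v_2, v_4, v_1$ in $\overline{G}$. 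The only nontrivial step in the whole proof is recognizing that the boundary case $k=5$ of part~(i) must be handled separately, precisely because the uniform four-vertex witness requires one extra vertex of slack on the cycle.
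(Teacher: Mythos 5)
Your proof is correct. Note, though, that the paper does not prove Lemma \ref{basic} at all: it is stated as a known fact with a pointer to the reference \cite{graphpaper} (Chandran--Lozin--Subramanian, \emph{Graphs of low chordality}), so there is no in-paper argument to compare against. Your self-contained contrapositive argument is sound on every point: for (ii), a chordless cycle of length $\geq 6$ induces exactly the two edges $\{v_1,v_2\}$ and $\{v_4,v_5\}$ on $\{v_1,v_2,v_4,v_5\}$, giving an induced $2K_2$; for (i), the same quadruple yields a chordless $4$-cycle $v_1,v_4,v_2,v_5$ in $\overline{G}$ when $k\geq 6$, and you correctly isolate $k=5$ as the genuine boundary case, where the self-complementarity of $C_5$ produces a chordless $5$-cycle in $\overline{G}$. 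Both conclusions contradict the respective hypotheses under the paper's definitions (chordal means every cycle of length $>3$ has a chord, so an induced $C_4$ or $C_5$ in $\overline{G}$ suffices). One small economy you could have taken: since the paper already records that co-chordal implies $2K_2$-free, the $k\geq 6$ case of (i) follows immediately from (ii), so only the $k=5$ case of (i) needs a separate argument; but handling both cases directly, as you do, is equally valid.
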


The toric ideals $I_G$ of $2K_2$-free graphs $G$
are studied in \cite{indispensable}.
(In \cite{indispensable}, $2K_2$-free graphs are called in a different name.)
On the other hand, 
the edge ideals $I(G)$ of $2K_2$-free graphs $G$
are studied by many researchers.
See, e.g., \cite{Nevo} and \cite{NePe}
together with their references and comments.
(In these papers,  $2K_2$-free graphs are called ``{\em $C_4$-free graphs}.")
One can characterize the toric ideals $I_{\widehat{G}}$ of $\widehat{G}$
that are generated by quadratic binomials
in terms of  $2K_2$-free graphs.

\begin{Theorem}
\label{suspension}
Let $G$ be a finite connected graph.
Then the following conditions are equivalent:
\begin{itemize}
\item[(i)]
$I_{\widehat{G}}$ is generated by quadratic binomials;
\item[(ii)]
$G$ is $2K_2$-free and
$I_G$ is generated by quadratic binomials;
\item[(iii)]
$G$ is $2K_2$-free and
satisfies the condition (i) in Proposition \ref{quadcriterion}.
\end{itemize}
\end{Theorem}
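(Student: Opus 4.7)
The plan is to treat the three implications separately. The implication (ii) $\Rightarrow$ (iii) is immediate, since Proposition~\ref{quadcriterion} states that quadratic generation of $I_G$ is equivalent to the conjunction of conditions (i)--(iii) there, and clause~(iii) of the theorem only requires the first of them. For (i) $\Rightarrow$ (ii), quadratic generation of $I_G$ comes for free from Proposition~\ref{inducedsubgraph} because $G$ is an induced subgraph of $\widehat{G}$. To extract the $2K_2$-freeness of $G$, I argue by contrapositive: if $G$ contains two induced independent edges $\{a,b\}, \{c,d\}$, the induced subgraph $H$ of $\widehat{G}$ on $\{a,b,c,d,n+1\}$ has exactly the edges $\{a,b\}, \{c,d\}$ together with the four edges through $n+1$. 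Inside $H$ the two triangles $C_1 = (a,b,n+1)$ and $C_2 = (c,d,n+1)$ are minimal odd cycles meeting only at $n+1$, and every edge of $H$ between $V(C_1)$ and $V(C_2)$ already lies in $E(C_1) \cup E(C_2)$. So $H$ violates clause~(ii) of Proposition~\ref{quadcriterion}, and Proposition~\ref{inducedsubgraph} then forces the same failure on $I_{\widehat{G}}$, contradicting~(i).

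The substantial direction is (iii) $\Rightarrow$ (i), which I attack by verifying each of the three clauses of Proposition~\ref{quadcriterion} for $\widehat{G}$. Two observations about the apex $n+1$ drive everything. First, since $n+1$ is adjacent to every other vertex, any cycle of $\widehat{G}$ through $n+1$ of length $\geq 4$ has a chord at $n+1$; in particular, every minimal odd cycle of $\widehat{G}$ containing $n+1$ is a triangle. Second, if $C = (v_1, \ldots, v_{2q})$ is an even cycle of length $2q \geq 6$ through $n+1 = v_{2q}$, then $\{v_{2q}, v_3\}$ is an even-chord of $C$, since its index gap is the odd number $2q - 3$. Clause~(i) of Proposition~\ref{quadcriterion} applied to $\widehat{G}$ falls out at once: if the even cycle of length $\geq 6$ avoids $n+1$ the hypothesis on $G$ transfers to $\widehat{G}$ (chords in $G$ remain chords in $\widehat{G}$), and otherwise the second observation supplies an even-chord.

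For clause~(ii) of Proposition~\ref{quadcriterion} applied to $\widehat{G}$, let $C_1, C_2$ be minimal odd cycles sharing exactly one vertex $v$, and split on where $n+1$ lies. When $v = n+1$, both $C_i = (n+1, u_i, w_i)$ are triangles and $\{u_1,w_1\}, \{u_2, w_2\}$ are independent edges of $G$ on which $2K_2$-freeness produces the required extra edge. When $n+1 \in V(C_1)$ but $v \neq n+1$, $C_1$ is a triangle $(n+1, v, w)$ with $w \notin V(C_2)$ (else $|V(C_1) \cap V(C_2)| \geq 2$), and I take $i = n+1$ together with any $j \in V(C_2) \setminus \{v\}$. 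When $n+1$ belongs to neither cycle, both live inside $G$ and $2K_2$-freeness applied to an edge of $C_1$ and an edge of $C_2$ both avoiding $v$ yields the needed edge. For clause~(iii), let $C_1, C_2$ be vertex-disjoint minimal odd cycles; if $n+1 \in V(C_1)$ then $n+1$ is joined to every vertex of $C_2$, producing $|V(C_2)| \geq 3$ bridges, while if both cycles lie in $G$ and only a single bridge $\{p, q\}$ exists, I pick an edge $\{c, d\} \in E(C_2)$ with $q \notin \{c, d\}$ (possible since $C_2$ has an edge not incident to $q$) and any $\{a, b\} \in E(C_1)$; $2K_2$-freeness on the independent pair $\{a,b\}, \{c,d\}$ produces a bridge whose endpoint in $V(C_2)$ differs from $q$, contradicting uniqueness. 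The main obstacle is the bookkeeping in clause~(ii) when $n+1$ lies in exactly one $C_i$: one has to invoke $|V(C_1) \cap V(C_2)| = 1$ to guarantee that the third triangle vertex is not in $V(C_2)$, for otherwise the candidate edge through $n+1$ might still belong to $E(C_1)$.
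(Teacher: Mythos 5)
Your proof is correct and follows essentially the same route as the paper: the same use of Proposition \ref{inducedsubgraph} and the two-triangles-through-the-apex configuration for (i) $\Rightarrow$ (ii), and the same case analysis on where the apex $n+1$ sits (forcing even-chords, triangles, and extra bridges) combined with $2K_2$-freeness for the remaining direction. The only difference is organizational: you close the cycle via a direct (iii) $\Rightarrow$ (i), which merges the paper's two steps (iii) $\Rightarrow$ (ii) and (ii) $\Rightarrow$ (i) into one verification of all three clauses of Proposition \ref{quadcriterion} for $\widehat{G}$.
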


\begin{proof}
((i) $\Rightarrow $ (ii))
Suppose that $I_{\widehat{G}}$ is generated by quadratic binomials.
Then $\widehat{G}$ satisfies the conditions in Proposition \ref{quadcriterion}.
Since $G$ is an induced subgraph of $\widehat{G}$,
$I_G$ is generated by quadratic binomials by Proposition \ref{inducedsubgraph}.
Assume that $G$ is not $2K_2$-free.
Then, the graph in Figure \ref{ribbon} is an induced subgraph of $\widehat{G}$.
This contradicts that $\widehat{G}$ satisfies the condition 
(ii) in Proposition \ref{quadcriterion}.

\begin{figure}
  \begin{center}
    \scalebox{0.5}{\includegraphics{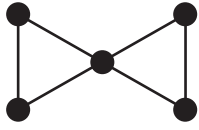}}
  \end{center}
  \caption{Two triangles having one common vertex.}
  \label{ribbon}
\end{figure}

((ii) $\Rightarrow $ (i))
Suppose that $G$ satisfies condition (ii)
and that $I_{\widehat{G}}$ is not generated by quadratic binomials.
Then the graph $\widehat{G}$ does not satisfy one of the conditions
in Proposition \ref{quadcriterion}.
Note that, since $G$ satisfies the conditions in Proposition \ref{quadcriterion},
if an even cycle or two odd cycles do not satisfy the conditions,
then they have the vertex $n+1$.

If an even cycle $C$ of length $\geq 6$ has the vertex $n+1$,
then any other vertices of $C$ are incident with $n+1$.
Thus $C$ has an even-chord.
If minimal odd cycles $C_1$ and $C_2$ have no common vertex and
$C_1$ contains $n+1$,
then $n+1$ is incident with all vertices of $C_2$.
Thus, $C_1$ and $C_2$ has at least three bridges.
Finally, suppose that minimal odd cycles $C_1$ and $C_2$
have exactly one common vertex $v$
and that $C_1$ contains $n+1$.
If $v \neq n+1$, then let $s$ $ (\neq v)$ be a vertex of $C_2$.
Then, since we have
$\{n+1, s\} \in E(\widehat{G}) \setminus (E(C_1) \cup E(C_2)) $,
$C_1$ and $C_2$ satisfy the condition (ii) in 
Proposition \ref{quadcriterion}.
Let $v=n+1$.
Since $C_1$ and $C_2$ are minimal and have the vertex $n+1$,
the length of $C_1$ and $C_2$ is 3 and hence,
$C_1 \cup C_2$ is the graph in Figure \ref{ribbon}.
If $C_1$ and $C_2$ do not satisfy
 the condition (ii) in 
Proposition \ref{quadcriterion},
then $C_1 \cup C_2$ is an induced subgraph of $\widehat{G}$.
Thus, $2K_2$ is an induced subgraph of $G$. 
This is a contradiction.

((ii) $\Rightarrow $ (iii))
It follows from Proposition \ref{quadcriterion}.

((iii) $\Rightarrow $ (ii))
Suppose that 
$G$ satisfies the condition (i) in Proposition \ref{quadcriterion}
and
$G$ is $2K_2$-free.
It is enough to show that $G$ satisfies the 
conditions (ii) and (iii) in Proposition \ref{quadcriterion}.
Let $C_1$ and $C_2$ be minimal odd cycles having exactly one common vertex $v$.
Then there exist edges $\{i,j\} \in E(C_1)$ and $\{k,\ell\} \in E(C_2)$
such that $v \notin \{i,j,k,\ell\}$.
Since $G$ is $2K_2$-free, one of 
$
\{i,k\}, 
\{i,\ell\}, 
\{j,k\}, 
\{j,\ell\}
$ belongs to $E(G)$.
Thus $C_1$ and $C_2$ satisfy the condition (ii) in  Proposition \ref{quadcriterion}.
Let $C_1$ and $C_2$ be minimal odd cycles having no common vertex.
Since $G$ is $2K_2$-free, 
for each edges $\{i,j\} \in E(C_1)$ and $\{k,\ell\} \in E(C_2)$,
one of 
$
\{i,k\}, 
\{i,\ell\}, 
\{j,k\}, 
\{j,\ell\}
$ belongs to $E(G)$.
It then follows that there exist at least two bridges between $C_1$ and $C_2$.
\end{proof}

\begin{Example}
In general, 
there is no implication between
the two conditions
(1) $I_G$ is generated by quadratic binomials and
(2) $G$ is $2K_2$-free.
In fact,
\begin{itemize}
\item[(a)]
Let $G$ be the graph in Figure \ref{odd}.
\begin{figure}
  \begin{center}
    \scalebox{0.5}{\includegraphics{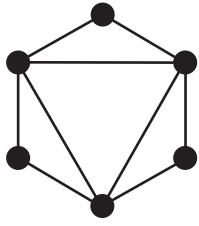}}
  \end{center}
  \caption{An even cycle with three odd chords.}
  \label{odd}
\end{figure} 
Then, $I_G$ is not generated by quadratic binomials.
On the other hand, $G$ is co-chordal (and hence $2K_2$-free).
\item[(b)]
If $G$ is a bipartite graph consisting of 
a cycle $C$ of length 6 and a chord of $C$,
then $I_G$ is generated by two quadratic binomials.
On the other hand, $G$ is not $2K_2$-free.
\end{itemize}
Thus, both $(1) \Rightarrow (2)$ and $(2) \Rightarrow (1)$
are false.
\end{Example}

By using the theory of the Rees ring of edge ideals,
we have a necessary condition for 
$I_{\widehat{G}}$ to possess  a quadratic Gr\"obner basis.

\begin{Proposition}
\label{HHX}
Let $G$ be a connected graph.
If  $I_{\widehat{G}}$ possesses a quadratic Gr\"obner basis,
then  $G$ is co-chordal.
\end{Proposition}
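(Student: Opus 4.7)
The plan is to reinterpret the hypothesis via the isomorphism $K[\widehat{G}] \cong \mathcal{R}(I(G))$ recorded just before the statement, and then invoke known results from the theory of Rees algebras together with Fr\"oberg's classical theorem for edge ideals.

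First, I would observe that if $I_{\widehat{G}}$ possesses a quadratic Gr\"obner basis, then in particular $K[\widehat{G}]$ is Koszul. Through the identification $K[\widehat{G}] \cong \mathcal{R}(I(G))$, this translates directly into the statement that the defining toric ideal of the Rees ring $\mathcal{R}(I(G))$ of the edge ideal $I(G)$ admits a quadratic Gr\"obner basis (and, in particular, the Rees ring is Koszul).

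Second, I would apply the theorem from the theory of Rees rings of monomial ideals asserting that, for a monomial ideal $I$ generated in a single degree whose Rees ring carries a quadratic Gr\"obner basis (or, more weakly, is Koszul), the ideal $I$ itself must have linear resolution. This is in the spirit of Blum's work on Koszul Rees algebras and of the ``x-condition'' of Herzog--Hibi--Ohsugi; its application to $I=I(G)$, which is generated in degree $2$, yields that the edge ideal $I(G)$ has $2$-linear resolution.

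Finally, I would close the argument by invoking Fr\"oberg's theorem, which characterizes edge ideals with linear resolution: $I(G)$ has a linear resolution if and only if the complementary graph $\overline{G}$ is chordal, i.e., $G$ is co-chordal. This gives precisely the conclusion sought. The most delicate step in this plan is the middle one, namely pinpointing and correctly applying the ``quadratic Gr\"obner basis of $\mathcal{R}(I)\Rightarrow I$ has linear resolution'' theorem; once that bridge is in place, the first and last steps are immediate invocations of standard results.
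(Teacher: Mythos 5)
Your proposal is correct and follows essentially the same route as the paper: the authors identify $K[\widehat{G}]$ with the Rees algebra $\mathcal{R}(I(G))$, invoke the result that a quadratic Gr\"obner basis for its defining ideal forces every power of $I(G)$ (in particular $I(G)$ itself) to have a linear resolution, and conclude by Fr\"oberg's theorem that $\overline{G}$ is chordal. The ``bridge'' you flag as delicate is exactly the citation the paper uses (\cite[Corollary 10.1.8]{JuTa}, from the Herzog--Hibi--Zheng/x-condition circle of ideas), so no further work is needed there.
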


\begin{proof}
Suppose that  $I_{\widehat{G}}$ possesses a quadratic Gr\"obner basis.
Then, by \cite[Corollary 10.1.8]{JuTa}, 
each power of the edge ideal $I(G)$ of $G$ has 
a linear resolution.
Hence, in particular, $I(G)$ itself has a linear resolution.
By Fr\"oberg's theorem \cite[Theorem 9.2.3]{JuTa},
$G$ is co-chordal as desired.
\end{proof}

The converse of Proposition \ref{HHX}
is false in general.
See, e.g., Example \ref{ex:another-minimal}.
However, if $G$ is bipartite, then these conditions are equivalent:

\begin{Theorem}
\label{bipartite}
Let $G$ be a bipartite graph.
Then the following conditions are equivalent:
\begin{itemize}
\item[(i)]
$I_{\widehat{G}}$ is generated by quadratic binomials;
\item[(ii)]
$K[\widehat{G}]$ is Koszul;
\item[(iii)]
$I_{\widehat{G}}$ possesses a quadratic Gr\"obner basis;
\item[(iv)]
$G$ is $2K_2$-free;
\item[(v)]
$G$ is co-chordal.
\end{itemize}
\end{Theorem}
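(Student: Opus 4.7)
The plan is to prove the cycle $(iii)\Rightarrow(ii)\Rightarrow(i)\Rightarrow(iv)\Rightarrow(v)\Rightarrow(iii)$, supplemented by the implication $(v)\Rightarrow(iv)$ from the bullet in Section~2 and by $(iii)\Rightarrow(v)$ from Proposition~\ref{HHX} (which gives a shorter diagonal if desired). The first three steps $(iii)\Rightarrow(ii)\Rightarrow(i)$ are the standard Koszul hierarchy recalled in the introduction, and $(i)\Rightarrow(iv)$ is immediate from Theorem~\ref{suspension}.

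The implication $(iv)\Rightarrow(v)$ requires the bipartiteness of $G$. Let $(A,B)$ be the bipartition. Both $A$ and $B$ span complete subgraphs of $\overline{G}$, so any induced cycle of $\overline{G}$ contains no three consecutive same-side vertices. A short case analysis then rules out induced cycles of length $\geq 4$ in $\overline{G}$: for length $4$, the only coloring avoiding an immediate chord is the pair pattern $A,A,B,B$, and in that case the induced $C_4$ of $\overline{G}$ is precisely the complement of an induced $2K_2$ of $G$, ruled out by $(iv)$; for length $k\geq 5$, either the cycle alternates $ABAB\cdots$ and produces a same-side pair at cyclic distance $2$ (a chord), or some run has length $2$ and the two vertices flanking it are forced to sit on the opposite side at cyclic distance $3$ (again a chord since $k\geq 5$). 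Hence $\overline{G}$ is chordal, i.e., $G$ is co-chordal.

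The decisive step is $(v)\Rightarrow(iii)$. Bipartite co-chordal graphs are exactly the chain graphs: the vertices of $A=\{a_1,\ldots,a_p\}$ and $B=\{b_1,\ldots,b_q\}$ can be ordered so that $N(a_1)\supseteq\cdots\supseteq N(a_p)$ and $N(b_1)\supseteq\cdots\supseteq N(b_q)$. I would label the edges of $\widehat{G}$ as $x_{ij}$ for $\{a_i,b_j\}\in E(G)$, $u_i$ for $\{a_i,n+1\}$, and $v_j$ for $\{b_j,n+1\}$, equip $K[\xb]$ with a lex order whose variable ordering is compatible with the nesting, and take as candidate Gr\"obner basis the three natural families of $4$-cycle binomials of $\widehat{G}$: the $G$-internal binomials $x_{ij}x_{kl}-x_{il}x_{kj}$ for $i<k$ and $j<l$ (well-defined by the nesting), the $A$-apex binomials $x_{ij}u_k-x_{kj}u_i$ for $i<k$ with a common neighbor $b_j$, and their $B$-apex symmetric counterparts. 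Buchberger's criterion then reduces to checking that every $S$-polynomial among these binomials rewrites to zero in one step, a check driven by the fact that the chain condition forces each critical overlap to produce a listed monomial.

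The main obstacle will be this final $S$-pair verification, especially the mixed pairs where one binomial is $G$-internal and the other crosses the apex: one must keep track of which variables appear in the leading terms under the chosen order and confirm that the nested neighborhood structure always resolves the overlap at the quadratic level. Once this is carried out, the cycle of implications is complete and the five conditions are equivalent.
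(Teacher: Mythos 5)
Your architecture is sound and several pieces are correct: $(iii)\Rightarrow(ii)\Rightarrow(i)$ is the standard hierarchy, $(i)\Rightarrow(iv)$ does follow from Theorem \ref{suspension}, and your direct combinatorial proof of $(iv)\Rightarrow(v)$ (three consecutive same-side vertices force a chord in $\overline{G}$; the only chordless colouring of a $4$-cycle is $A,A,B,B$, which is the complement of an induced $2K_2$; a length-$2$ run in a longer cycle forces its two flanking vertices onto the opposite side at cyclic distance $3$) checks out and is an implication the paper never proves directly --- the paper instead gets $(v)$ only as a consequence of $(iii)$ via Proposition \ref{HHX} and uses the trivial $(v)\Rightarrow(iv)$.

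However, there is a genuine gap at the decisive step $(v)\Rightarrow(iii)$. You correctly identify bipartite co-chordal graphs as chain (Ferrers) graphs and write down the three plausible families of $4$-cycle binomials of $\widehat{G}$, but the entire mathematical content of this implication is the Buchberger verification that these quadrics form a Gr\"obner basis under some explicit order, and you explicitly defer it (``the main obstacle will be this final $S$-pair verification''). Without specifying the variable order precisely and reducing the mixed $S$-pairs (internal versus $A$-apex versus $B$-apex), the implication is asserted, not proved; this is essentially the nontrivial content of the Corso--Nagel analysis of Rees algebras of Ferrers ideals. The paper sidesteps this computation entirely: since $G$ is bipartite, every odd cycle of $\widehat{G}$ passes through the apex $n+1$, so by \cite[Proposition 5.5]{centrally} there is a bipartite graph $G'$ with $I_{\widehat{G}}=I_{G'}$, and Proposition \ref{Koszulbipartite} applied to $G'$ yields $(i)\Leftrightarrow(ii)\Leftrightarrow(iii)$ in one stroke; combined with $(iv)\Rightarrow(i)$ (via Lemma \ref{basic}, Proposition \ref{Koszulbipartite} and Theorem \ref{suspension}) and $(iii)\Rightarrow(v)\Rightarrow(iv)$, the loop closes with no Gr\"obner basis ever being constructed by hand. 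To complete your route you must either carry out the $S$-pair reduction in full or substitute the paper's reduction to the bipartite case.
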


\begin{proof}
First, (v) $\Rightarrow $ (iv) is trivial.
By Proposition \ref{HHX}, we have (iii) $\Rightarrow $ (v).

((iv) $\Rightarrow $ (i))
Suppose that $G$ is $2K_2$-free.
By Lemma \ref{basic}
and Proposition \ref{Koszulbipartite}, $I_G$ is generated by quadratic binomials.
Hence (i) follows from Theorem \ref{suspension}.

((i) $\Leftrightarrow $ (ii) $\Leftrightarrow $ (iii))
Since $G$ is bipartite, any odd cycle of $\widehat{G}$
has the vertex $n+1$.
Then by \cite[Proposition 5.5]{centrally},
there exists a bipartite graph $G'$ such that
$I_{\widehat{G}} = I_{G'}$.
By Proposition \ref{Koszulbipartite},
$I_{G'}$ is generated by quadratic binomials if and only if
$I_{G'}$ possesses a quadratic Gr\"obner basis.
Thus, three conditions (i), (ii) and (iii) are equivalent as desired.
\end{proof}

\begin{Remark}
Bipartite graphs satisfying one of the conditions in Theorem \ref{bipartite}
are called {\em Ferrers graphs} (by relabeling the vertices).
The edge ideal $I(G)$ of a Ferrers graph $G$ is well-studied.
See, e.g., \cite{CoNa} and \cite{DoEn}.
\end{Remark}

If $G$ is not bipartite, then the conditions (i) and (ii)
in Theorem \ref{bipartite} are not equivalent.
In fact, 

\begin{Example}
\label{wheel}
Let $G$ be a cycle of length 5.
Then $\overline{G}$ is also a cycle of length 5.
Hence $G$ is not co-chordal but $2K_2$-free.
By Theorem \ref{suspension} and Proposition \ref{HHX},
$I_{\widehat{G}}$ is generated by quadratic binomials and
has no quadratic Gr\"obner bases.
Note that $\widehat{G}$ is the graph in Example \ref{syarin}
and that $K[\widehat{G}]$ is not Koszul.
\end{Example}

Recall that a finite connected simple graph $G$ is called
{\em $(*)$-minimal} if $G$ satisfies the condition
\begin{enumerate}
\item[$(*)$]
$I_{G}$ is generated by quadratic binomials and $I_{G}$ 
possesses no quadratic Gr\"obner basis
\end{enumerate}
and if no induced subgraph
$H$ $(\neq G)$ satisfies the condition $(*)$.
The suspension graph $\widehat{G}$ given in Example \ref{wheel} is
a $(*)$-minimal graph.
We generalize this example and give a nontrivial infinite series of 
 $(*)$-minimal graphs:

\begin{Theorem}
\label{-cycle}
Let $G$ be the graph on the vertex set $[n]$
whose complement is a cycle of length $n$.
If $n \geq 5$, then $\widehat{G}$ is  $(*)$-minimal,
i.e., $\widehat{G}$ satisfies the following:
\begin{itemize}
\item[(i)]
$I_{\widehat{G}}$ is generated by quadratic binomials;
\item[(ii)]
$I_{\widehat{G}}$ has no quadratic Gr\"obner basis;
\item[(iii)]
For any induced subgraph $H$ ($\neq \widehat{G}$) of 
$\widehat{G}$, the toric ideal
$I_H$ of $H$ possesses a quadratic Gr\"obner basis.
\end{itemize}
\end{Theorem}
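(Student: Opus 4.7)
The plan is to establish the three statements (i), (ii), (iii) in turn, using the tools developed in Section 1 and the earlier part of Section 2. The bulk of the work will lie in (iii).

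For (i), I would apply Theorem \ref{suspension}, which reduces the problem to showing that $G = \overline{C_n}$ is $2K_2$-free and satisfies condition (i) of Proposition \ref{quadcriterion}. That $G$ is $2K_2$-free is immediate, since $\overline{G} = C_n$ contains no induced $4$-cycle for $n \geq 5$. For condition (i), let $C = (v_1, v_2, \ldots, v_{2q})$ be any even cycle of $G$ of length $2q \geq 6$. Pick any three odd-indexed vertices $v_i, v_j, v_k$; each pair among them is a potential even-chord of $C$. If none of these pairs is an edge of $G$, then all three are edges of $\overline{G} = C_n$, forcing $\{v_i, v_j, v_k\}$ to be a triangle in $C_n$, which is impossible for $n \geq 4$. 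Hence $C$ has an even-chord, as required.

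For (ii), I would apply Proposition \ref{HHX}: since $\overline{G} = C_n$ is itself a chordless cycle of length $n \geq 5$, the graph $G$ is not co-chordal, and so $I_{\widehat{G}}$ can have no quadratic Gr\"obner basis.

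For (iii), by Proposition \ref{inducedsubgraph} it suffices to treat the maximal proper induced subgraphs $H = \widehat{G} \setminus \{w\}$, one for each vertex $w$ of $\widehat{G}$. The dihedral symmetry of $\overline{C_n}$ on $[n]$ collapses these to two cases: (a) $w = n+1$, giving $H = \overline{C_n}$, and (b) $w \in [n]$, giving $H = \widehat{\overline{P_{n-1}}}$, the suspension of the complement of the path on $n-1$ vertices. The main obstacle is to produce a quadratic Gr\"obner basis in each case. Observe that in (b) the path $P_{n-1}$ is trivially chordal, hence $\overline{P_{n-1}}$ is co-chordal; this matches the necessary condition of Proposition \ref{HHX}, but by the discussion following that proposition it is in general weaker than what we need. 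My approach in both cases would be to induct on $n$: the base cases $n = 5, 6$ yield small, explicit graphs whose toric ideals can be analyzed by hand (for $n = 5$, $\widehat{G}$ is the $6$-vertex wheel of Example \ref{wheel}, and $H$ is either the $5$-cycle $\overline{C_5}$ or the fan $\widehat{P_4}$, each directly checkable). For the inductive step, the key is to choose a monomial order adapted to the cyclic (respectively, linear) labeling of the underlying path or cycle so that every S-pair of two quadratic generators reduces modulo the other quadratic relations, the reductions being supplied by the many $4$-cycles present in $\overline{C_n}$ and $\overline{P_{n-1}}$. Verifying that this combinatorial reduction genuinely closes up, and does not require higher-degree elements, is the technical heart of the argument.
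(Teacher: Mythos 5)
Your treatment of (ii) coincides with the paper's (both invoke Proposition \ref{HHX} and the fact that $C_n$ is not chordal for $n\geq 5$), but the other two parts have real problems. The decisive gap is in (iii): you reduce correctly to the two graphs $\overline{C_n}$ and the suspension of $\overline{P_{n-1}}$, but the actual production of a quadratic Gr\"obner basis is deferred to an inductive Buchberger computation that you yourself flag as ``the technical heart'' and do not carry out; nothing in your sketch indicates why the S-pair reductions close up in degree $2$. The paper avoids this entirely: it takes the quadratic Gr\"obner basis ${\mathcal G}_n$ of $I_{K_n}$ from \cite[Theorem 9.1]{Stu} (vertices on a regular $n$-gon, initial monomials $=$ non-crossing pairs of edges), observes that the boundary edges $\{1,2\},\ldots,\{n-1,n\},\{1,n\}$ never occur in a \emph{non-initial} monomial of ${\mathcal G}_n$, and obtains quadratic Gr\"obner bases for $I_G$ and for every $I_H$ with $\overline{H}$ a union of paths by elimination. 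That single observation replaces your entire induction, and it is the idea your proposal is missing.

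Part (i) is not merely incomplete but incorrect as written. With the paper's convention an even-chord of $C=(v_1,\ldots,v_{2q})$ is a chord $\{v_i,v_j\}$ with $j-i$ \emph{odd}, so a pair of odd-indexed vertices is an odd-chord candidate, not an even-chord candidate; your pigeonhole argument therefore produces at best one odd-chord, which does not verify condition (i) of Proposition \ref{quadcriterion}. Worse, the conclusion ``$C$ has an even-chord'' is false: in $G=\overline{C_9}$ the $6$-cycle $(1,4,7,2,5,8)$ has as its only even-chord candidates the three long diagonals $\{1,2\}$, $\{4,5\}$, $\{7,8\}$, all of which are edges of $C_9$ and hence non-edges of $G$; this cycle satisfies condition (i) only through the three-crossing-odd-chords alternative. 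The paper sidesteps this by deducing ``$I_G$ is generated by quadratic binomials'' from the Gr\"obner basis already built in (iii) and then applying the implication (ii) $\Rightarrow$ (i) of Theorem \ref{suspension} together with $2K_2$-freeness of $\overline{C_n}$; if you want to keep your structure, you should do the same rather than attempt a direct verification of the even-cycle condition.
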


\begin{proof}
Since a cycle of length $n \geq 5$ is not chordal,
(ii) follows from Proposition \ref{HHX}.

Next, we will show (iii).
In \cite[Theorem 9.1]{Stu},
a quadratic Gr\"obner basis ${\mathcal G}_n$ of the toric ideal
of the complete graph $K_n$ of $n$ vertices is constructed.
In the proof, the vertices of $K_n$ are identified with 
the vertices of a regular $n$-gon in the plane labeled clockwise from $1$ to $n$. 
The Gr\"obner basis ${\mathcal G}_n$ consists of quadratic binomials $f$ such that
the initial monomial of $f$
corresponds to 
a pair of non-intersecting edges of $K_n$
and the non-initial monomial of $f$
corresponds to 
a pair of intersecting edges of $K_n$.
Note that
the edges $\{1,2\},\{2, 3\}, \ldots, \{n-1,n\},\{1,n\}$
do not appear in the non-initial monomial in each binomial of ${\mathcal G}_n$.

Let $H$ be an induced subgraph of $\widehat{G}$.
If $H = G$, then $\overline{G}$ is the cycle $C = 
(\{1,2\},\{2,3\},\ldots,\{n-1, n\},\{1,n\} )$.
By the above observation on ${\mathcal G}_n$,
we have a quadratic Gr\"obner basis of $I_G$ by the elimination
of ${\mathcal G}_n$.
If $H \neq G$, then $\overline{H}$ is a graph all of whose connected components
are paths.
Since $\overline{H}$ is a subgraph of the cycle $C = 
( \{1,2\},\{2,3\},\ldots,\{n-1, n\},\{1,n\} )$,
we have a quadratic Gr\"obner basis of $I_H$ by the elimination
of ${\mathcal G}_n$.

Finally, we will prove the condition (ii).
By the condition (iii), $I_G$ is generated by quadratic binomials.
Moreover, since $\overline{G}$ is the cycle of length $n \geq 5$,
$G$ is $2K_2$-free.
Thus, we have (i) by Theorem \ref{suspension} as desired.
\end{proof}

Even if $G$ is co-chordal, $\widehat{G}$ may be 
$(*)$-minimal:

\begin{Example} 
\label{ex:another-minimal}
Let $G$ be the graph whose complement is
the chordal graph in Figure \ref{odd}.
Then, $I_{\widehat{G}}$ is generated by quadratic binomials
since $G$ is co-chordal (and hence $2K_2$-free) and $I_G =(0)$.
On the other hand,
computational experiments in Section 3 show that
$\widehat{G}$ is $(*)$-minimal.
\end{Example}

\section{Computational experiments}
In this section, we enumerate all finite connected simple graphs $G$
satisfying the condition $(*)$ up to 8 vertices
by utilizing various software.
Proposition \ref{quadcriterion} is a key of 
our enumeration method.

Proposition \ref{quadcriterion} gives an algorithm
to determine if a toric ideal $I_G$ is generated by quadratic binomials.
Since the criteria in Proposition \ref{quadcriterion} are
characterized by cycles of $G$,
we need to enumerate all even cycles and minimal odd cycles of $G$
in order to implement the algorithm.
We implement the algorithm by utilizing
CyPath \cite{cypath} which is a cycles and paths enumeration program
implemented by T.~Uno.
The algorithm is used at step (\ref{item:criterion}) of
the following procedure to search for the graphs satisfying $(*)$.

\begin{enumerate}
\item (generating step) \label{item:generating} \\
We use nauty \cite{nauty} as a generator
of all connected simple graphs with $n$ vertices
up to isomorphism.

\item (criterion step) \label{item:criterion} \\
The criteria in Proposition \ref{quadcriterion} detect graphs $G$
whose toric ideals $I_G$ are generated by quadratic binomials.
These are candidates for satisfying the condition $(*)$.

\item (exclusion step) \label{item:exclusion} \\
For each candidate $G$, we iterate the following computation.
\begin{enumerate}
\item \label{item:vector}
A new weight vector $w$ is chosen randomly on each iteration.
\item We compute a Gr\"obner basis of the toric ideal $I_G$
with respect to the chosen weight vector $w$ with Risa/Asir \cite{asir}.
\item If the Gr\"obner basis is quadratic
then the graph $G$ is excluded from candidates.
\end{enumerate}

\item (final check step) \label{item:check} \\
We check the Koszul property of $K[G]$ with Macaulay2 \cite{M2}.
If it is not Koszul then $I_G$ possesses no quadratic Gr\"obner basis.
If it is indeterminable then we compute all Gr\"obner bases by using
TiGERS \cite{tigers} or CaTS \cite{cats}.
\end{enumerate}

In our experimentation, we take 10000 to be the number of iterations
at step (\ref{item:exclusion}) in the case of $8$ vertices.
Then, there are 214 graphs as remaining candidates and
we can check that 213 graphs of these are not Koszul with Macaulay2.
The last one is indeterminable by computational methods in our environment.
However, Theorem \ref{-cycle} tells us
that it has no quadratic Gr\"obner basis,
because it is the suspension of the complement graph
of a cycle whose length is $7$.
Therefore, we complete classification of the finite graphs with $8$ vertices.  
Table \ref{table:number} shows numbers of
(\ref{item:generating}) the connected simple graphs,
(\ref{item:criterion}) the graphs whose toric ideals $I_G$ are
generated by quadratic binomials (include number of zero ideals),
(\ref{item:check}) the graphs satisfying $(*)$
(include number of the graphs which have degree $1$ vertices)
respectively.

\begin{table}[htbp]
\begin{center}
\begin{tabular}{|c||r|rr|rr|} \hline
vertices  & \makebox[4em][c]{(1)} & \multicolumn{2}{|c|}{(2)} & \multicolumn{2}{|c|}{(4)} \\ \hline
3         &     2 &    2 &  (2) &    0 &      \\ \hline
4         &     6 &    6 &  (3) &    0 &      \\ \hline
5         &    21 &   20 &  (7) &    0 &      \\ \hline
6         &   112 &   95 & (14) &    1 &  (0) \\ \hline
7         &   853 &  568 & (34) &   14 &  (2) \\ \hline
8         & 11117 & 4578 & (78) &  214 & (51) \\ \hline
\end{tabular}
\end{center}
\caption{} \label{table:number}
\end{table}

We list the $14$ graphs (Figures \ref{fig:7-318}--\ref{fig:7-820})
satisfying $(*)$ with $7$ vertices.
Figure \ref{fig:7-815} belongs to the infinite series in Theorem \ref{-cycle}
and Figure \ref{fig:7-358} is the $(*)$-minimal graph
in Example \ref{ex:another-minimal}.
The list for the graphs with $8$ vertices is
available at

\bigskip

\begin{center}
URL: {\tt http://www2.rikkyo.ac.jp/\~{}ohsugi/minimalexamples/}
\end{center}

\begin{figure}[!htbp]
\begin{minipage}{0.32\textwidth}
\begin{center}
\includegraphics[width=5.5cm]{\figseven/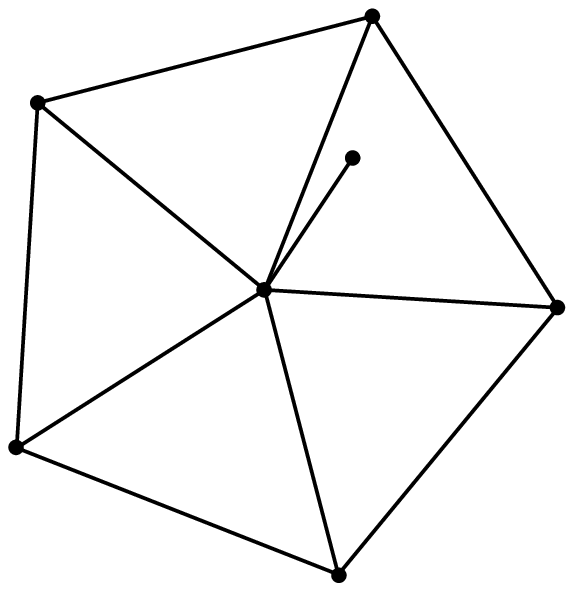}
\end{center}
\vspace{-9mm}
\caption{} \label{fig:7-318}
\end{minipage}
\begin{minipage}{0.32\textwidth}
\begin{center}
\includegraphics[width=5.5cm]{\figseven/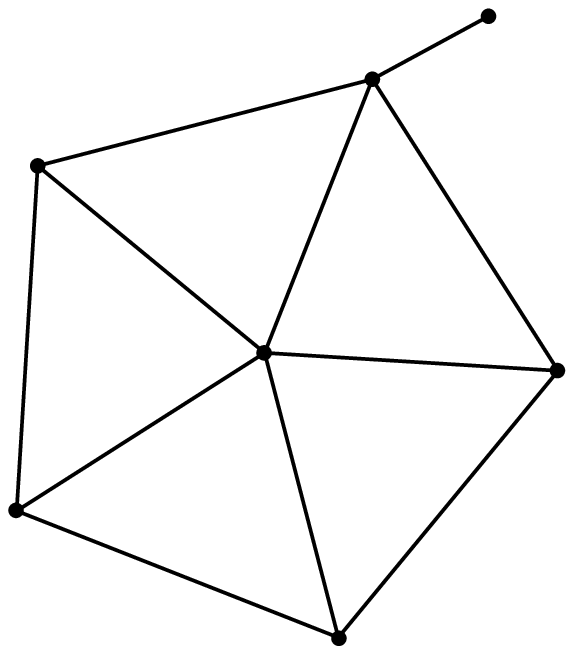}
\end{center}
\vspace{-9mm}
\caption{} \label{fig:7-345}
\end{minipage}
\begin{minipage}{0.32\textwidth}
\begin{center}
\includegraphics[width=5.5cm]{\figseven/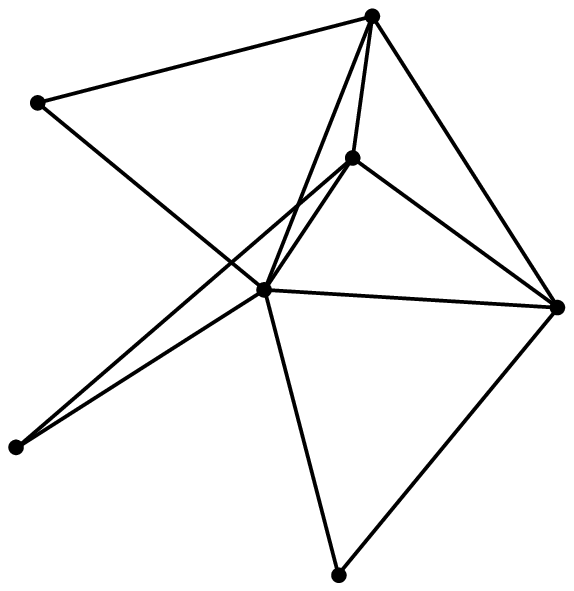}
\end{center}
\vspace{-9mm}
\caption{} \label{fig:7-358}
\end{minipage}
\\
\begin{minipage}{0.32\textwidth}
\begin{center}
\includegraphics[width=5.5cm]{\figseven/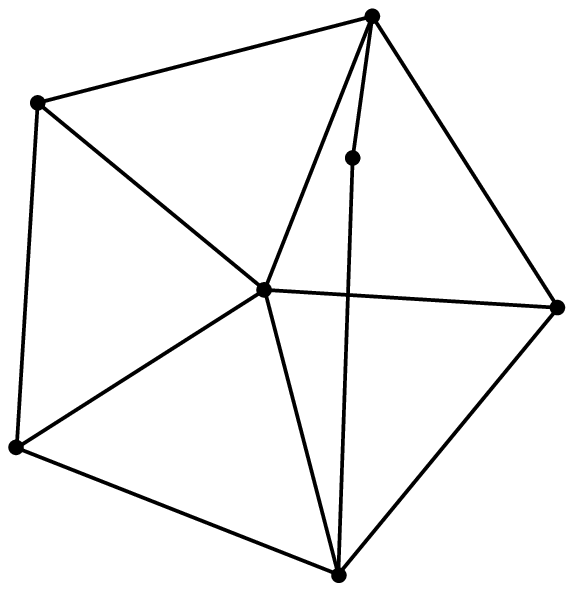}
\end{center}
\vspace{-9mm}
\caption{} \label{fig:7-521}
\end{minipage}
\begin{minipage}{0.32\textwidth}
\begin{center}
\includegraphics[width=5.5cm]{\figseven/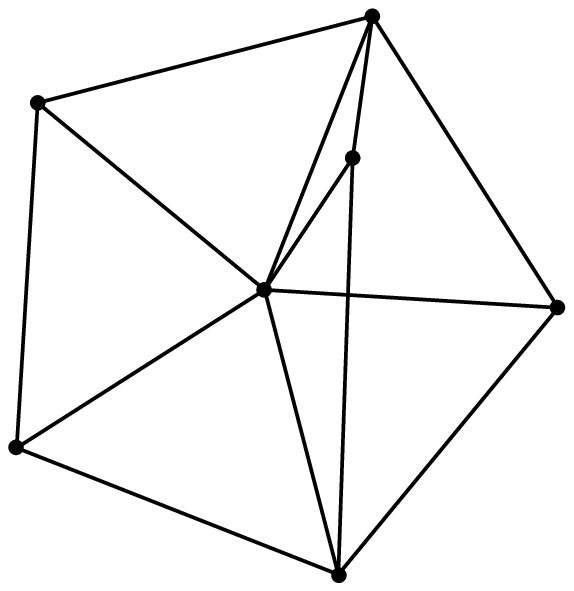}
\end{center}
\vspace{-9mm}
\caption{} \label{fig:7-522}
\end{minipage}
\begin{minipage}{0.32\textwidth}
\begin{center}
\includegraphics[width=5.5cm]{\figseven/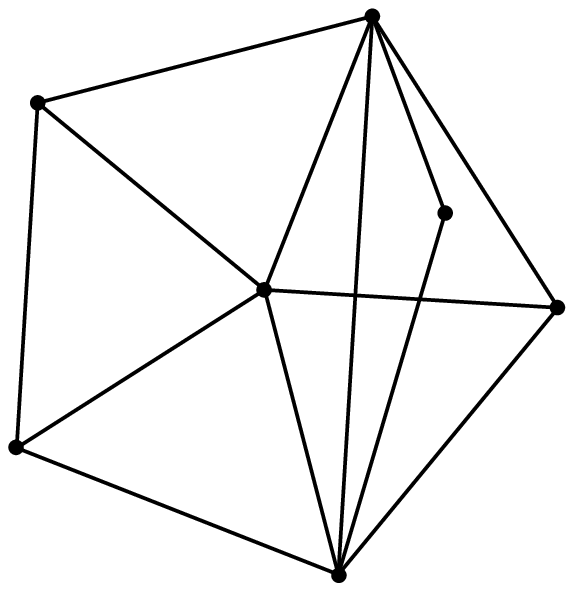}
\end{center}
\vspace{-9mm}
\caption{} \label{fig:7-538}
\end{minipage}
\\
\begin{minipage}{0.32\textwidth}
\begin{center}
\includegraphics[width=5.5cm]{\figseven/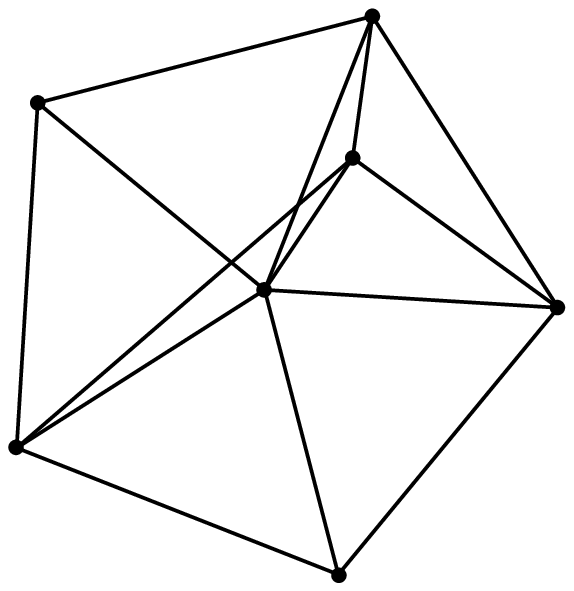}
\end{center}
\vspace{-9mm}
\caption{} \label{fig:7-701}
\end{minipage}
\begin{minipage}{0.32\textwidth}
\begin{center}
\includegraphics[width=5.5cm]{\figseven/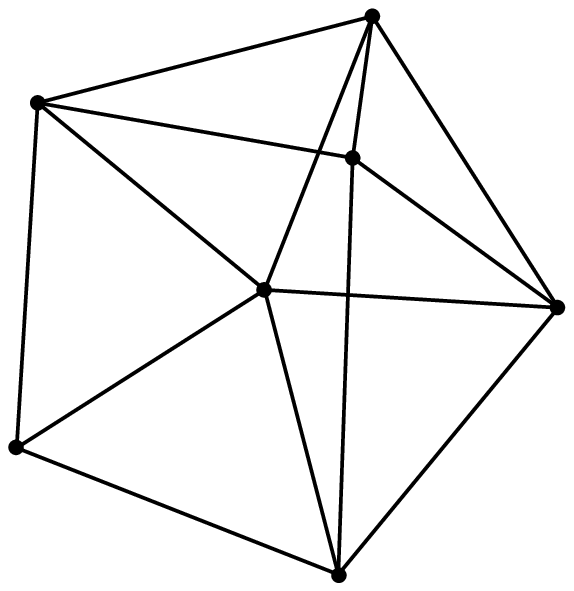}
\end{center}
\vspace{-9mm}
\caption{} \label{fig:7-804}
\end{minipage}
\begin{minipage}{0.32\textwidth}
\begin{center}
\includegraphics[width=5.5cm]{\figseven/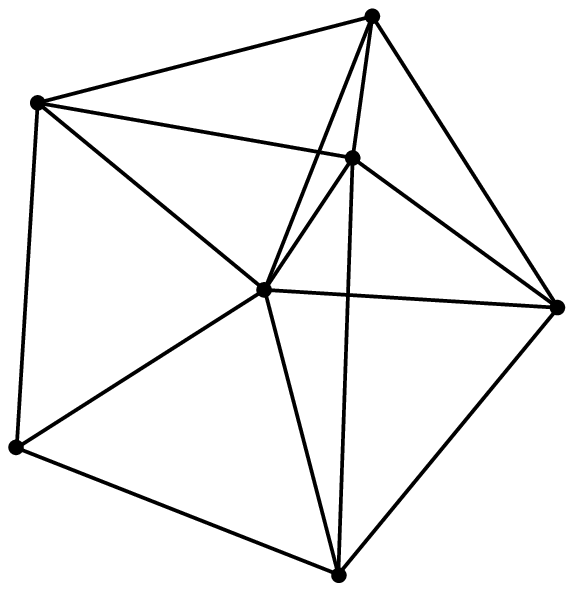}
\end{center}
\vspace{-9mm}
\caption{} \label{fig:7-808}
\end{minipage}
\\
\begin{minipage}{0.32\textwidth}
\begin{center}
\includegraphics[width=5.5cm]{\figseven/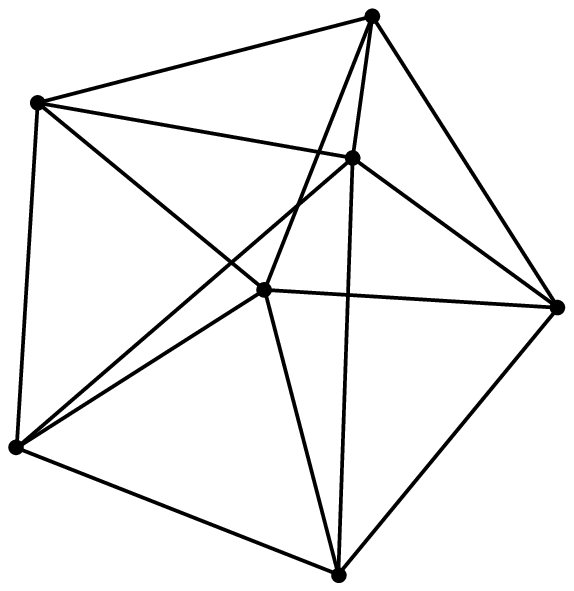}
\end{center}
\vspace{-9mm}
\caption{} \label{fig:7-809}
\end{minipage}
\begin{minipage}{0.32\textwidth}
\begin{center}
\includegraphics[width=5.5cm]{\figseven/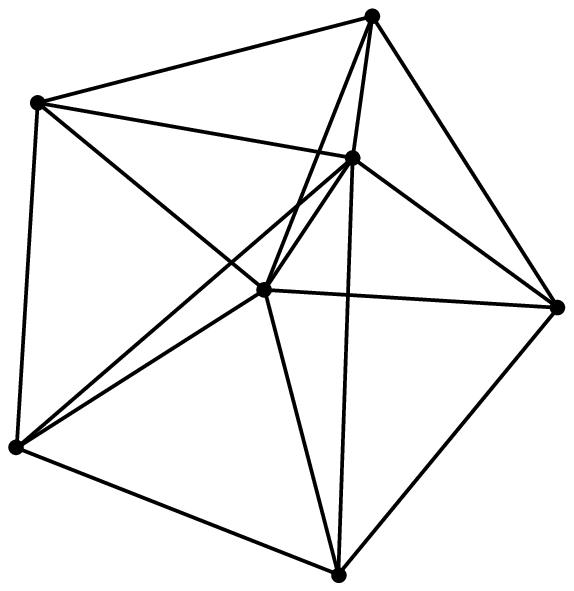}
\end{center}
\vspace{-9mm}
\caption{} \label{fig:7-810}
\end{minipage}
\begin{minipage}{0.32\textwidth}
\begin{center}
\includegraphics[width=5.5cm]{\figseven/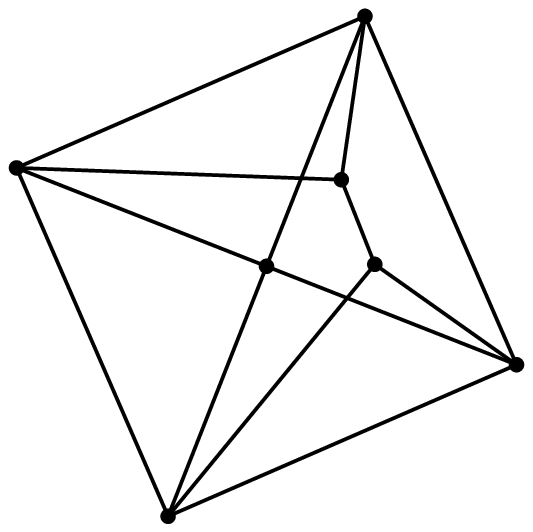}
\end{center}
\vspace{-9mm}
\caption{} \label{fig:7-813}
\end{minipage}
\\
\begin{minipage}{0.32\textwidth}
\begin{center}
\includegraphics[width=5.5cm]{\figseven/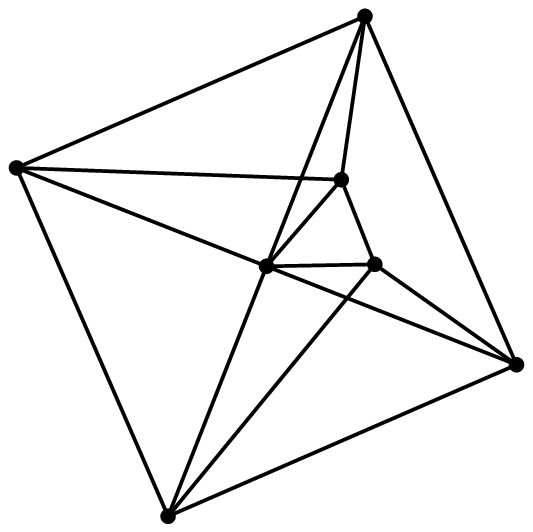}
\end{center}
\vspace{-9mm}
\caption{} \label{fig:7-815}
\end{minipage}
\begin{minipage}{0.32\textwidth}
\begin{center}
\includegraphics[width=5.5cm]{\figseven/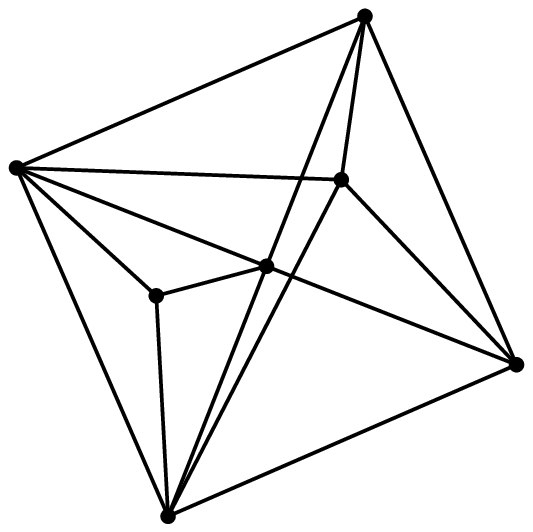}
\end{center}
\vspace{-9mm}
\caption{} \label{fig:7-820}
\end{minipage}
\begin{minipage}{0.32\textwidth}
\begin{center}
\quad
\end{center}
\end{minipage}
\end{figure}

\bigskip

\section*{Acknowledgement}
This research was supported by the JST (Japan Science and Technology Agency)
CREST (Core Research for Evolutional Science and Technology)
research project
{\em Harmony of Gr\"obner Bases and the Modern Industrial Society}
in the frame of the JST Mathematics Program
``Alliance for Breakthrough between Mathematics and Sciences.''

 \bigskip

\end{document}